\def\Kcal{\mathcal{K}}
\def\Ncal{\mathcal{N}}
\def\Mcal{\mathcal{M}}
\def\Hcal{\mathcal{H}}
\def\C{\mathbb{C}}\def\c{\mathbb{C}}
\def\d{\mathbb{D}}
\def\R{\mathbb{R}}\def\r{\mathbb{R}}
\def\Z{\mathbb{Z}}\def\z{\mathbb{Z}}
\def\N{\mathbb{N}}\def\n{\mathbb{N}}
\newtheorem{theorem}{Theorem}[section]
\newtheorem{claim}[theorem]{Claim}
\newtheorem{lemma}[theorem]{Lemma}
\newtheorem{corollary}[theorem]{Corollary}
\newtheorem{remark}[theorem]{Remark}
\theoremstyle{definition}
\newtheorem{definition}[theorem]{Definition}
\numberwithin{equation}{section}
\begin{document}

\title[Complete minimal surfaces and harmonic functions]
{Complete minimal surfaces and harmonic functions}

\author[A.~Alarc\'{o}n]{Antonio Alarc\'{o}n$^\dagger$}
\address{Departamento de Matem\'{a}tica Aplicada \\ Universidad de Murcia
\\ E-30100 Espinardo, Murcia \\ Spain}
\email{ant.alarcon@um.es}

\author[I.~Fern\'{a}ndez]{Isabel Fern\'{a}ndez$^\ddagger$}
\address{Departamento de Matem\'{a}tica Aplicada I \\ Universidad de Sevilla
\\ E-41012 Sevilla \\ Spain}
\email{isafer@us.es}

\author[F.J.~L\'{o}pez]{Francisco J. L\'{o}pez$^\dagger$}
\address{Departamento de Geometr\'{\i}a y Topolog\'{\i}a \\
Universidad de Granada \\ E-18071 Granada \\ Spain}
\email{fjlopez@ugr.es}


\thanks{$^\dagger$ Research partially
supported by MCYT-FEDER research project MTM2007-61775 and Junta
de Andaluc\'{i}a Grant P06-FQM-01642}
\thanks{$^\ddagger$ Research partially
supported by MCYT-FEDER research project MTM2007-64504 and Junta
de Andaluc\'{i}a Grant P06-FQM-01642}

\subjclass[2000]{49Q05; 30F15, 53C42, 32H02}
\keywords{Complete minimal surfaces, harmonic functions on Riemann surfaces, Gauss map, holomorphic immersions}

\begin{abstract} We prove that for any open Riemann surface $\mathcal{N}$ and any non constant harmonic function $h:\mathcal{N} \to \r,$
there exists a complete conformal minimal immersion $X:\mathcal{N} \to \r^3$ whose third coordinate function coincides with  $h.$

As a consequence, complete minimal surfaces with arbitrary conformal structure and whose Gauss map misses two points are constructed.
\end{abstract}

\maketitle

\thispagestyle{empty}

\section{Introduction}\label{sec:intro}
Conformal minimal immersions of Riemann surfaces in $\r^3$ are harmonic  maps. This basic fact has strongly influenced the global theory of minimal surfaces, supplying this field with powerful tools coming from classical complex analysis and Riemann surfaces theory.

If $X=(X_1,X_2,X_3):\mathcal{N} \to \r^3$ is conformal and minimal,  the holomorphic 1-forms $\phi_j:=\partial X_j,$ $j=1,2,3,$ satisfy the equation  $\phi_1^2+\phi_2^2+\phi_3^2=0.$ As a consequence, any conformal minimal immersion is uniquely determined (up to translations) by any two of its harmonic coordinate functions. On the other hand, it is reasonable to think that the family of conformal minimal immersions {\em with a prescribed coordinate function} is in general vast. However, the construction of this kind of surfaces turns out to be more complicated than expected under completeness assumptions. A pioneering result in this direction can be found in \cite{af}, where a satisfactory answer in the simply connected case is given. The aim of this paper is to extend this result to the more general setting of arbitrary open Riemann surfaces.

Our main theorem asserts that:

\begin{quote}
{\bf Theorem I.} {\em Let $\mathcal{N}$ be an open Riemann surface, let $h:\Ncal\to\R$ be a non constant harmonic function and let $p:\Hcal_1(\Ncal,\Z)\to\R^3$ be a group morphism such that the third coordinate of $p(\gamma)$ coincides with $\mbox{Im}\int_{\gamma} \partial h$, for all $\gamma\in\Hcal_1(\Ncal,\Z)$.

Then there exists a complete conformal minimal immersion $X=(X_1,X_2,X_3):\Ncal\to\R^3$ with $X_3=h$ and flux map $p_X=p.$}
\end{quote}
Recall that the flux map of a conformal minimal immersion $X:\mathcal{N} \to \r^3$ is given by $p_X(\gamma)=\mbox{Im}\int_{\gamma} \partial X$, for all $\gamma\in\Hcal_1(\Ncal,\Z)$.

It may be followed from Theorem I some interesting results concerning the Gauss map of minimal surfaces, the Calabi-Yau problem,  holomorphic null curves in $\c^3$ and maximal surfaces in the Lorentz-Minkowski space $\r^3_1.$

The study of the Gauss map is one of the fundamental problems in the  theory of minimal surfaces. Fujimoto \cite{fujimoto} showed that the  number of exceptional values of the Gaussian image of a complete non flat minimal surface is at most four, improving some classical results by Osserman \cite{oss} and Xavier \cite{xavier}. Since Sherk's minimal surfaces omit four points, then  Fujimoto's theorem is sharp. However, the number of exceptional values strongly depends on the underlying conformal structure. For instance, by Picard's Theorem there are no conformal non flat minimal immersions of the complex plane in $\r^3$ whose Gauss map omits three points. So it is natural to wonder whether any open Riemann surface admits a complete conformal minimal immersion with Gauss map omitting two points. We answer affirmatively this question, proving considerably more:

\begin{quote}
{\bf Theorem II.} {\em Let $\mathcal{N}$ be an open Riemann surface, and let $p:\Hcal_1(\Ncal,\Z)\to\R^3$ be a group morphism.

Then there exists a complete conformal minimal immersion $X:\Ncal\to\R^3$ whose Gauss map omits two antipodal points and $p_X=p.$}
\end{quote}

Calabi-Yau conjectures deal with the existence problem of complete minimal surfaces with bounded coordinate functions. There is large literature on this topic, see \cite{jorge-xavier,nadi,c-m,f-m-m} for a good setting. From Theorem I follows that a (necessary and) sufficient condition for an open Riemann surface to admit a complete conformal non flat minimal immersion into an open slab of $\r^3$ is to carry non constant bounded harmonic functions (see Corollary \ref{co:slab1}).

Likewise, by Theorem I, any open Riemann surface $\mathcal{N}$ carrying a non constant holomorphic function $f:\mathcal{N}\to\c$ admits a complete null holomorphic immersion $(F_1,F_2,F_3):\mathcal{N}\to\c^3$ (and so a complete holomorphic immersion $(F_1,F_3):\mathcal{N}\to\c^2$) with $F_3=f.$ The family of open Riemann surfaces admitting non constant bounded holomorphic functions is particularly interesting from several points of view. This space contains examples of arbitrary open topological type, and as above any such surface admits a complete null holomorphic immersion in $\c^2\times \d$ (and so a complete holomorphic immersion in $\c\times\d$). We have compiled these ideas in the following result (for the construction of proper complete null curves in $\c^2\times\d$ and proper complete holomorphic curves in $\c\times\d$ see Corollary \ref{co:slab2}):
\begin{quote}
{\bf Corollary III.} {\em Let $M$ be an open orientable surface. Then there exists a complete minimal surface homeomorphic to $M$  all whose associate surfaces are well defined and contained in a slab of $\r^3.$}
\end{quote}
Simply connected bounded  complete null curves immersed in $\c^3$ and bounded  complete holomorphic curves immersed in $\c^2$ with arbitrary finite genus has been constructed in \cite{muy1} and \cite{muy2}, respectively. Moreover, complete minimal surfaces properly immersed in an open slab of $\r^3$ of arbitrary topological type can be found in \cite{f-m-m} (see also \cite{jorge-xavier,r-t,lop-ori,lop-slab,m-m,a-f-m} for a good setting). The problem of constructing bounded complete null holomorphic curves in $\c^3$ with  nontrivial topology remains open.

Finally, Theorem I provides weakly complete conformal maximal immersions in the Lorentz-Minkowski 3-spacetime $\r^3_1$ with
singularities and prescribed  spacelike or timelike coordinate functions (the notion of weakly complete maximal surface with singularities was defined in \cite{u-y}). See Corollary \ref{co:maxi} for more details.

In a forthcoming paper \cite{al2}, the authors will extend these results to the nonorientable setting.
\section{Preliminaries}

For a topological surface $M,$ we will denote as  $\partial(M)$ the one dimensional topological manifold determined by the boundary points of $M.$ Given $S \subset M,$ $S^\circ$ and $\overline{S}$ will denote the interior and the closure of $S$ in $M,$ respectively.
A Riemann surface $M$ is said to be {\em open} if it is non-compact and $\partial(M)=\emptyset.$
\begin{remark}
In the sequel $\Ncal$ will denote a fixed but arbitrary open Riemann surface, $W\subset\Ncal$ an open connected subset of finite topology, and $S\subset W$ a compact set. 
\end{remark}

For a proper subset $M$ of $\Ncal$ we will denote by $\Omega_0(M)$ as the space of holomorphic
1-forms on an open neighborhood of $S$ in $\Ncal,$ whereas $\Omega_0^*(M)$ will denote
the space of complex 1-forms $\theta$ of type $(1,0)$ that are continuous on $M$ and holomorphic on $M^\circ$. As usual, a 1-form $\theta$ on $M$ is said to be of type $(1,0)$ if for any conformal chart $(U,z)$ in $ \mathcal{ N},$ $\theta|_{U \cap M}=h(z) dz$ for some function $h:U \cap M \to \c.$

\begin{definition}[Admissible set]
A compact subset $S\subset W$ is said to be admissible in $W$ if and only if:
\begin{itemize}
\item $W-S$ has no bounded components in $W$ (by definition, a connected component $V$ of $W-S$ is said to be {\em bounded} in $W$ if $\overline{V}\cap W$ is compact, where $\overline{V}$ is the closure of $V$ in $\Ncal$),
\item $M_S:=\overline{S^\circ}$  consists of a finite collection of pairwise disjoint compact regions in $W$ with   $\mathcal{ C}^0$ boundary,
\item $C_S:=\overline{S-M_S}$ consists of a finite collection of pairwise disjoint analytical Jordan arcs (recall that a compact Jordan arc in $\Ncal$ is said to be analytical if it is contained in an open analytical Jordan arc in $\Ncal$), and
\item any component $\alpha$ of $C_S$  with an endpoint  $P\in M_S$ admits an analytical extension $\beta$ in $W$ such that the unique component of $\beta-\alpha$ with endpoint $P$ lies in $M_S.$
\end{itemize}
\end{definition}
Observe that if $S$ is admissible in $\Ncal$ then it is admissible in $W$ as well, but the contrary is in general false.\\


With the previous notation, a function $f:S\to\C$ defined on an admissible set $S$ in $W$ is said to be {\em smooth} if $f|_{M_S}$
admits a smooth extension $f_0$ to an open domain $V\subset W$ containing $M_S,$ and for any component $\alpha$ of $C_S$
and any open analytical Jordan arc $\beta$ in $W$ containing $\alpha,$  $f$ admits an smooth extension $f_\beta$ to $\beta$
satisfying that $f_\beta|_{V \cap \beta}=f_0|_{V \cap \beta}.$

Likewise, a 1-form $\theta\in \Omega_0^*(S)$ is said to be {\em smooth} if,
for any closed conformal disk $(U,z)$ on $W$ such that $ S\cap U$ is admissible in $W,$ $\theta/dz$ is smooth in the previous sense.

Given a smooth function $f:S\to\C$ holomorphic on $S^\circ$, we set $df \in \Omega_0^*(S)$
as the smooth 1-form given by $df|_{M_S}=d (f|_{M_S})$ and $df|_{\alpha \cap U}=(f \circ \alpha)'(x)dz|_{\alpha \cap U},$
where $(U,z=x+i y)$ is a conformal chart on $W$ such that $\alpha \cap U=z^{-1}(\R \cap z(U)).$
Obviously, $df|_\alpha(t)= (f\circ\alpha)'(t) dt$ for any component $\alpha$ of $C_S,$ where $t$ is any smooth parameter along $\alpha.$
A smooth 1-form $\theta \in \Omega_0^*(S)$ is said to be {\em exact} if $\theta=df$ for some smooth $f:S\to\C$ holomorphic on $S^\circ$,
or equivalently if $\int_\gamma \theta=0$ for all $\gamma \in \mathcal{ H}_1(S,\Z).$\\


The following Lemma and its Corollaries will be required to approximate minimal immersions by immersions defined on larger domains (possibly with higher topology).

\begin{lemma}[\text{\cite[Approximation Lemma]{al}}]\label{lem:runge}
Let $S$ be an admissible compact set in $W,$ and $\Phi=(\phi_j)_{j=1,2,3}$ a smooth triple in $\Omega_0^\ast(S)^3$,
 such that $\sum_{j=1}^3\phi_j^2=0$, $\sum_{j=1}^3|\phi_j|^2$ never vanishes on $S$, and $\Phi|_{M_S}\in\Omega_0(M_S)^3$.

Then $\Phi$ can be uniformly approximated on $S$ by a sequence $\{\Phi_n=(\phi_{j,n})_{j=1,2,3}\}_{n\in\N}$ in $\Omega_0(W)^3$ satisfying
\begin{enumerate}[$(i)$]
\item $\sum_{j=1}^3\phi_{j,n}^2=0,$
\item $\sum_{j=1}^3|\phi_{j,n}|^2$ never vanishes on $W$ and
\item $\Phi_n-\Phi$ is exact on $S$, for all $n\in\N.$
\end{enumerate}
\end{lemma}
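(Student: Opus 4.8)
The plan is to trade the two nonlinear constraints---the null relation $\sum_j\phi_j^2=0$ and the nonvanishing $\sum_j|\phi_j|^2\neq0$---for a \emph{free} approximation problem in a pair of scalar data, and then to repair the finitely many periods by a finite-dimensional deformation. Since $W$ is itself an open Riemann surface, it carries a nowhere-vanishing holomorphic $1$-form $\tau$ (Gunning--Narasimhan). Writing $\phi_j=f_j\,\tau$ converts the hypotheses into the statement that $f=(f_1,f_2,f_3)$ is smooth on $S$, holomorphic on $M_S$, and maps $S$ into the punctured null quadric $\mathcal{A}_*=\{z\in\C^3:\ z_1^2+z_2^2+z_3^2=0\}\setminus\{0\}$. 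For $\Phi_n=f^{(n)}\tau$, conditions $(i)$ and $(ii)$ together say exactly that $f^{(n)}(W)\subset\mathcal{A}_*$, while $(iii)$ reads $\int_\gamma(f^{(n)}-f)\,\tau=0$ for the finitely many generators $\gamma$ of $\Hcal_1(S,\Z)$.

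I would then exploit the spinor parametrization $\rho(a,b)=(a^2-b^2,\ i(a^2+b^2),\ 2ab)$, a holomorphic $2$-to-$1$ covering $\C^2\setminus\{0\}\to\mathcal{A}_*$ for which $\sum_j|\rho(a,b)|^2=2(|a|^2+|b|^2)^2$. Lift $f$ to a pair $(a,b)$ with no common zero, holomorphic on $M_S$ and smooth on $S$; the only obstruction to a single-valued lift is a $\Z/2$ monodromy, so in general $(a,b)$ is a pair of holomorphic sections of a flat square root of the trivial bundle, but since $\rho(-a,-b)=\rho(a,b)$ the triple built from it is single-valued regardless. Now apply the classical Runge--Mergelyan theorem on the admissible set $S$---valid because $W-S$ has no bounded components---to approximate $a,b$ uniformly by $a_n,b_n$ holomorphic on all of $W$ (sections of the same flat bundle). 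Setting $f^{(n)}:=\rho(a_n,b_n)$ makes the null relation $(i)$ automatic, and uniform closeness of $(a_n,b_n)$ to $(a,b)$ yields uniform approximation of $\Phi$ by $\Phi_n=f^{(n)}\tau$ on $S$. The nonvanishing $(ii)$ on \emph{all} of $W$ is then a general-position issue: near $S$ it is inherited from $(a,b)$ by closeness, while elsewhere the common-zero set of $a_n,b_n$ is the preimage of $0\in\C^2$ under the holomorphic map $(a_n,b_n)\colon W\to\C^2$, of complex codimension $2>\dim_\C W=1$, so a small Runge perturbation supported away from $S$ removes it without spoiling the approximation.

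The crux is $(iii)$: approximation alone only makes the periods $\int_\gamma(f^{(n)}-f)\tau$ small, and I must force them to vanish while staying inside $\mathcal{A}_*$. Here the spinor picture is decisive, because \emph{any} perturbation of $(a_n,b_n)$ keeps $\rho(a_n,b_n)$ on the null quadric automatically. I would therefore introduce a holomorphic spray $f^{(n)}_\zeta:=\rho\bigl(a_n+\sum_k\zeta_k\alpha_k,\ b_n+\sum_k\zeta_k\beta_k\bigr)$, $\zeta\in\C^N$, where the $\alpha_k,\beta_k$ are Runge-approximations of data concentrated near chosen points of the cycles. Because $d\rho$ is onto $T_{f^{(n)}}\mathcal{A}_*$ and these tangent $2$-planes, as the base point runs over $\mathcal{A}_*$, already span $\C^3$ (given $v\in\C^3$ one finds a null $z$ with $\sum_j z_jv_j=0$), the $\alpha_k,\beta_k$ can be chosen so that the holomorphic period map $\zeta\mapsto\bigl(\int_\gamma(f^{(n)}_\zeta-f)\tau\bigr)_\gamma\in\C^{3\,\mathrm{rank}\,\Hcal_1(S,\Z)}$ is submersive at $\zeta=0$, where it takes a value close to $0$. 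The implicit function theorem then provides $\zeta_n\to0$ killing every period exactly, so that $\Phi_n:=f^{(n)}_{\zeta_n}\tau$ satisfies $(i)$, $(ii)$, $(iii)$ and approximates $\Phi$.

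The main obstacle is precisely this last step: engineering a null- and nonvanishing-preserving, finite-dimensional deformation whose period map is submersive, \emph{compatibly} with the mixed admissible structure of $S$. The period-fixing directions must be realized by genuine holomorphic deformations on $W$ and must be routed through both the surface pieces $M_S$ and the arcs $C_S$; the smooth-versus-holomorphic bookkeeping along $C_S$, together with the need to keep the supports of the spray disjoint from one another and from the previous perturbations, is where the argument demands the most care.
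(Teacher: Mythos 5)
First, a point of order: the paper you were given does not actually prove this lemma --- it quotes it from \cite{al}, and the only glimpse it offers of that proof is Remark 2.5, which reveals the key structural device there: the third component of the approximating triple is produced in the multiplicative form $\phi_{3,n}=e^{f_n}\psi_n$ with $\psi_n$ nowhere vanishing. In other words, the cited proof runs through the $(g,\phi_3)$ Weierstrass representation and builds the nonvanishing condition $(ii)$ into the data multiplicatively, so that it survives all subsequent period-repairing deformations automatically. Your route --- trivialize by a Gunning--Narasimhan $1$-form, lift to spinors through the double cover of the punctured null quadric, Runge-approximate the spinor components, then kill the periods with a finite-dimensional holomorphic spray and the implicit function theorem --- is genuinely different; it is essentially the Oka-theoretic machinery developed later by Alarc\'on--Forstneri\v{c} for directed holomorphic curves. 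The architecture is viable, and your treatment of $(i)$ and $(iii)$ is correct in outline (modulo routine care with the $\Z/2$ monodromy of the lift, where the character on $\Hcal_1(S,\Z)$ must be extended to $\Hcal_1(W,\Z)$; this works because the quotient is torsion-free for Runge pairs).

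The genuine gap is in $(ii)$, the nonvanishing on \emph{all} of $W$, and it occurs at two places. First, ``a small Runge perturbation supported away from $S$'' does not exist: a holomorphic object on $W$ cannot be supported away from anything; it can only be made small on $S$. What does work is genericity (translate $(a_n,b_n)$ by a generic small constant, since the image of $W$ is a null set in $\C^2$), but any such perturbation changes the periods, so nonvanishing and period-exactness cannot be decoupled in the order you propose. Second, and more seriously: your final period-fixing deformation is \emph{additive} in the spinor components, $(a_n,b_n)\mapsto\bigl(a_n+\sum_k\zeta_k\alpha_k,\ b_n+\sum_k\zeta_k\beta_k\bigr)$ with $\alpha_k,\beta_k$ fixed global functions. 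On the noncompact surface $W$ the infimum of $|a_n|+|b_n|$ is in general $0$, so an arbitrarily small $\zeta$ can create common zeros of the pair far from $S$ --- that is, zeros of $\Phi_n$ --- and nothing in your argument excludes this: the implicit function theorem hands you one specific $\zeta_n$, so you cannot invoke genericity in $\zeta$ without first building extra transversality into the spray and showing the bad parameter set meets the solution manifold of the period equations in a proper subset. This is exactly the difficulty that the multiplicative form $e^{f_n}\psi_n$ of \cite{al} (and, in the later Alarc\'on--Forstneri\v{c} theory, sprays generated by flows tangent to the punctured null quadric, which never leave it) is designed to eliminate: there, the deformations that repair the periods are incapable of creating zeros anywhere on $W$. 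To close your argument you must replace the additive spray by one that preserves nonvanishing by construction --- exponential/multiplicative on the spinor components, or tangential to the punctured quadric --- or supply the missing transversality argument.
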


Recall that a 1-form  $\theta \in \Omega_0^*(S)$ is said to be uniformly approximated on $S$ by 1-forms in $\Omega_0(W)$, if there exists  $\{\theta_n\}_{n \in \N}\subset\Omega_0(W)$ such that $\{\frac{\theta_n-\theta}{dz}\}_{n \in \N} \to 0$ uniformly on $S \cap U,$ for any conformal closed disc $(U,dz)$ on $W.$

\begin{corollary}[\text{\cite[Corollary 2.10]{al}}]\label{co:f3}
The sequence $\{\Phi_n=(\phi_{j,n})_{j=1,2,3}\}_{n\in\N}$ in the above lemma can be obtained so that
$\phi_{3,n}=\phi_3$ for all $n\in\N$, provided that $\phi_3$ extends holomorphically to $W$ and never vanishes on $C_S.$
\end{corollary}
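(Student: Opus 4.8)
The plan is to reduce the problem to an approximation statement for the Gauss map alone, exploiting that holding $\phi_3$ fixed leaves exactly one functional degree of freedom in the Weierstrass data. Writing $g=\phi_3/(\phi_1-i\phi_2)$ for the Gauss map associated to $\Phi$, the isotropy relation $\sum_j\phi_j^2=0$ is equivalent to
\[
\phi_1=\tfrac12\left(\tfrac1g-g\right)\phi_3,\qquad \phi_2=\tfrac{i}{2}\left(\tfrac1g+g\right)\phi_3,
\]
so that, once $\phi_3$ is held fixed, the whole triple is determined by $g$. A triple of this form lies in $\Omega_0(W)^3$ if and only if $g\phi_3$ and $\phi_3/g$ are holomorphic on $W$, i.e.\ $g$ is holomorphic and zero-free off the zero set of $\phi_3$ while, at each zero of $\phi_3$ of order $k$, $g$ has a zero or a pole of order at most $k$; moreover $\sum_j|\phi_j|^2=\tfrac12|\phi_3|^2(|g|+1/|g|)^2$ never vanishes precisely when at every such zero the order of the zero or pole of $g$ equals $k$ exactly. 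Since by hypothesis $\phi_3$ extends holomorphically to all of $W$ and never vanishes on $C_S$, its zeros are confined to $M_S$ and to $W\setminus S$; in particular $g$ is finite and zero-free on $C_S$, where the forthcoming approximation carries no divisor constraint.

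With this dictionary I would recast the conclusion as follows: approximate $g$, uniformly on $S$ in the spherical distance, by meromorphic functions $g_n$ on $W$ whose zero/pole divisor matches that of $\phi_3$ in the sense above, and then define $\phi_{j,n}$ $(j=1,2)$ by the displayed formulas with $g_n$ in place of $g$, together with $\phi_{3,n}:=\phi_3$. By construction $\phi_{3,n}=\phi_3$, the triple $\Phi_n$ is isotropic and nondegenerate on $W$, and its third periods coincide with those of $\Phi$ automatically, so the exactness condition $(iii)$ reduces to $\int_\gamma\phi_{j,n}=\int_\gamma\phi_j$ for $j=1,2$ and all $\gamma\in\Hcal_1(S,\Z)$. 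The existence of such period-matching approximants $g_n$ is obtained by the very scheme underlying Lemma \ref{lem:runge}: a Mergelyan-type approximation of $g$ on the admissible set $S$ (extended $\phi_3$-compatibly across $M_S$ and along $C_S$, where $g$ is finite and zero-free), followed by a finite-dimensional correction killing the residual period errors.

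The main obstacle is precisely this last period correction, now carried out under the constraint that $\phi_3$ stay rigidly fixed. In the unconstrained Lemma \ref{lem:runge} all three forms may be perturbed, whereas here the correcting deformations must act on $g$ alone: they must preserve isotropy (automatic in the parametrization), must not introduce zeros or poles of $g$ away from the zeros of $\phi_3$ (so as to keep $\Phi_n\in\Omega_0(W)^3$ and nondegenerate), and must realize an arbitrary small prescribed vector of $(\phi_1,\phi_2)$-periods. I would produce these through the L\'opez--Ros deformations $g\mapsto g\,\exp\!\left(\sum_k t_k h_k\right)$ for a finite family of holomorphic functions $h_k$ on $W$: these keep $\phi_3$ fixed and, being multiplications by nowhere-vanishing factors, preserve both the divisor conditions and the nondegeneracy; one then solves for the real parameters $t_k$ by the implicit function theorem, the differential of the associated period map at $t=0$ being surjective for a suitable choice of the $h_k$. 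The hypothesis $\phi_3\neq 0$ on $C_S$ is exactly what confines all delicate divisor matching to $M_S$ and to the freely chosen part $W\setminus S$, leaving the approximation and correction of $g$ along the arcs unobstructed; combining the Mergelyan step with this correction yields the desired sequence with $\phi_{3,n}=\phi_3$.
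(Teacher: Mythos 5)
Your strategy is essentially the one behind this statement: the paper does not prove Corollary \ref{co:f3} internally but imports it from \cite[Corollary 2.10]{al}, and the proof there (whose multiplicative structure is echoed in the remark following Corollary \ref{co:divisor}) likewise fixes $\phi_3$, encodes the remaining freedom in the Gauss map $g$, and closes the $(\phi_1,\phi_2)$-periods by multiplying $g$ by exponentials of holomorphic functions on $W$ and invoking an implicit function theorem, exactly as you propose. The two steps you leave as black boxes --- the Mergelyan-type approximation of $g$ with divisor control (which in full detail also requires placing zeros or poles of $g_n$ at the zeros of $\phi_3$ in $W\setminus S$ via the Weierstrass divisor theorem, and correcting winding numbers around the cycles of $S$ before a global logarithm can be taken) and the surjectivity of the differential of the period map for suitable $h_k$ --- are precisely the machinery of the cited Approximation Lemma, so your reduction is faithful to how the result is actually obtained in the reference.
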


\begin{corollary}\label{co:divisor}
The sequence $\{\Phi_n=(\phi_{j,n})_{j=1,2,3}\}_{n\in\N}$ obtained in Lemma \ref{lem:runge} can be chosen so that
 $\phi_{3,n}$ never vanishes on $W$, for all $n\in\N$, provided that $\phi_3$ never vanishes on $S$.
\end{corollary}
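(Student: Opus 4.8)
The plan is to reduce the statement to Corollary \ref{co:f3}. Concretely, it suffices to produce a smooth triple $\Phi'=(\phi_j')_{j=1,2,3}\in\Omega_0^\ast(S)^3$, as close as we wish to $\Phi$ on $S$, with $\sum_j(\phi_j')^2=0$, with $\sum_j|\phi_j'|^2$ nowhere zero on $S$, with $\Phi'|_{M_S}\in\Omega_0(M_S)^3$, and such that (a) $\Phi'-\Phi$ is exact on $S$, and (b) $\phi_3'$ extends to a nowhere-vanishing holomorphic $1$-form $\psi$ on $W$. Granting this, $\psi$ is in particular nonvanishing on $C_S$, so Corollary \ref{co:f3} applies to $\Phi'$ and yields a sequence $\{\Phi_n'\}\subset\Omega_0(W)^3$ with $\phi_{3,n}'=\psi$ for all $n$, satisfying $(i)$--$(iii)$ of Lemma \ref{lem:runge} relative to $\Phi'$. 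Since $\psi$ never vanishes on $W$, the third component of $\Phi_n'$ never vanishes on $W$; and because $\Phi_n'-\Phi=(\Phi_n'-\Phi')+(\Phi'-\Phi)$ is a sum of two exact forms on $S$, by $(iii)$ and by (a), the sequence $\{\Phi_n'\}$ is the one required in Corollary \ref{co:divisor}.

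The construction of $\psi$ is an Oka--Mergelyan argument. Recall that every open Riemann surface carries a nowhere-vanishing holomorphic $1$-form; I fix one such $\tau$ on $W$. Then $\phi_3/\tau$ is a continuous nonvanishing function on $S$, holomorphic on $M_S$, whose winding numbers along the cycles of $S$ define a homomorphism $H_1(S,\Z)\to\Z$. By admissibility $W-S$ has no bounded components, so $H_1(S,\Z)\hookrightarrow H_1(W,\Z)$ and this homomorphism extends to $H_1(W,\Z)$; since $W$ is Stein we have $H^1(W,\mathcal{O})=0$, whence by the exponential sheaf sequence every integral winding class is realized by some $F\in\mathcal{O}^\ast(W)$. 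Replacing $\tau$ by $\tau F$ I may assume that $\phi_3/\tau$ has trivial winding along every cycle of $S$, hence admits a single-valued logarithm $\ell$ on $S$, smooth and holomorphic on $M_S$. Approximating $\ell$ uniformly on $S$ by a holomorphic function $\hat\ell$ on $W$ and setting $\psi:=e^{\hat\ell}\tau$, I obtain a nowhere-vanishing holomorphic $1$-form on $W$ as close to $\phi_3$ on $S$ as desired.

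To build the remaining components I use the Weierstrass data. Since $\phi_3$ never vanishes on $S$, neither does $\phi_1-i\phi_2$, and the Gauss map $g:=\phi_3/(\phi_1-i\phi_2)$ is a smooth function $S\to\C^\ast$, holomorphic on $M_S$, with $\phi_1=\tfrac12(1/g-g)\phi_3$ and $\phi_2=\tfrac{i}2(1/g+g)\phi_3$. I then put $\phi_3':=\psi$ and $\phi_1':=\tfrac12(1/g'-g')\psi$, $\phi_2':=\tfrac{i}2(1/g'+g')\psi$ for a function $g'\colon S\to\C^\ast$ to be chosen close to $g$. For any such choice $\Phi'$ is null, and $\sum_j|\phi_j'|^2=|\psi|^2\big(1+\tfrac14|1/g'-g'|^2+\tfrac14|1/g'+g'|^2\big)$ never vanishes, so $\Phi'$ is an admissible datum for Corollary \ref{co:f3}; taking $g'$ close to $g$ and $\psi$ close to $\phi_3$ makes $\Phi'$ close to $\Phi$ on $S$.

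The delicate point, and the heart of the matter, is condition (a): the replacements $\phi_3\rightsquigarrow\psi$ and $g\rightsquigarrow g'$ must be arranged so that all three periods of $\Phi'-\Phi$ vanish along the cycles of $S$. The null constraint couples the three components, so a component-wise correction fails, and I expect this to be the main obstacle; the remedy is a dominating-spray argument. First, in the Mergelyan step one prescribes the periods of $\psi$ so that $\psi-\phi_3$ is already exact on $S$. Then, writing $g'=g\,\exp\!\big(\sum_k s_k v_k\big)$ for suitable holomorphic functions $v_k$ on $W$ (a multiplicative spray keeping $g'$ valued in $\C^\ast$), one checks that the period map $s\mapsto\big(\int_\gamma(\phi_1'-\phi_1),\int_\gamma(\phi_2'-\phi_2)\big)_{\gamma}$ is a submersion at $s=0$ for an appropriate choice of the $v_k$; the implicit function theorem then supplies $s$ near $0$, hence $g'$ near $g$, for which $\phi_1'-\phi_1$ and $\phi_2'-\phi_2$ are exact on $S$ as well. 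With (a) and (b) secured, Corollary \ref{co:f3} completes the proof as in the first paragraph.
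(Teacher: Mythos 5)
Your reduction to Corollary \ref{co:f3} in the first paragraph is sound as far as it goes, but note first that it is not the paper's argument: the paper justifies Corollary \ref{co:divisor} purely by the remark that follows it, namely by inspecting the proof of the Approximation Lemma in \cite{al}, where the third components are already produced in the form $\phi_{3,n}=e^{f_n}\psi_n$ with $f_n$ holomorphic on $W$ and $\psi_n\in\Omega_0(W)$ nowhere vanishing whenever $\phi_3$ is nowhere vanishing on $S$; since $e^{f_n}$ never vanishes, the conclusion is read off from the cited construction and nothing new is proved at the level of this paper. Your proposal instead tries to manufacture the nonvanishing extension and then repair the periods by hand, and it is exactly the repair step --- the one you yourself call the heart of the matter --- that contains a genuine gap.

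The claimed submersivity of the period map for the spray $g'=g\exp\big(\sum_k s_kv_k\big)$ is false in general. Differentiating, $\partial_{s_k}\phi_1'\big|_{s=0}=-\tfrac12\big(\tfrac1g+g\big)v_k\psi$ and $\partial_{s_k}\phi_2'\big|_{s=0}=\tfrac{i}{2}\big(g-\tfrac1g\big)v_k\psi$, so along a fixed basis cycle $\gamma$ every derivative of the pair of periods lies in the span of the vectors $\big(\tfrac1g+g,\,i(g-\tfrac1g)\big)(p)$, $p\in\gamma$, and two points $p,q$ contribute linearly independent directions if and only if $g(p)^2\neq g(q)^2$; on a connected cycle this happens for some $p,q$ if and only if $g$ is nonconstant along $\gamma$. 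The hypotheses of Corollary \ref{co:divisor} allow $g$ to be constant: the flat data $\Phi=\big(\tfrac12(\tfrac1c-c)\phi_3,\,\tfrac{i}{2}(\tfrac1c+c)\phi_3,\,\phi_3\big)$, $c\in\C\setminus\{0\}$, on an annular admissible $S$ satisfies $\sum_j\phi_j^2=0$, $\sum_j|\phi_j|^2\neq0$ and $\phi_3\neq0$ on $S$. For such data the two linearized period equations along each cycle are proportional (both are multiples of $\int_\gamma v_k\psi$), the differential has at most half the required rank, and your implicit function theorem scheme does not close. In the globally flat case the period problem happens to be solved by $s=0$, since $\phi_j'-\phi_j$ is then a constant multiple of the exact form $\psi-\phi_3$; but your argument does not observe this, and in mixed situations --- $g$ constant along some basis cycles and nonconstant along others --- neither the trivial solution nor the implicit function theorem covers all cycles simultaneously, and the natural fix (first deforming the data, period-preservingly, so that $g$ is nonconstant along every cycle) is the same chicken-and-egg problem again. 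A second, smaller issue of the same nature: you cannot simply ``prescribe the periods of $\psi$ so that $\psi-\phi_3$ is exact,'' because $\psi=e^{\hat\ell}\tau F$ depends nonlinearly on the unknown $\hat\ell$, so Runge-type period prescription (which applies to objects depending linearly on the unknown) is not available off the shelf; this step needs its own implicit-function argument. That one is fixable, since the linearization $\hat\ell\mapsto\big(\int_\gamma\hat\ell\,e^{\ell}\tau F\big)_\gamma$ is surjective, but as written it is asserted rather than proved.
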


\begin{remark}
Although Corollary \ref{co:divisor} is not explicitly stated in \cite{al}, it can be deduced from the proof of the Approximation Lemma in \cite{al}.
Indeed, the $1$-form $\phi_{3,n}$ is defined as $\phi_{3,n}= e^{f_n} \psi_n$, where $f_n$ is a holomorphic function on $W$, and
$\psi_n\in\Omega_0(W)$ never vanishes on $W$ provided that $\phi_3$ does in $S,$ $n\in\n.$
\end{remark}


\subsection{Minimal Surfaces} \label{sec:wei}

As remarked in Section \ref{sec:intro}, the coordinates functions of  a conformal minimal immersion $X=(X_1,X_2,X_3):W \to \R^3$ are harmonic.
If we denote  $\partial$ as the global complex operator given by
$\partial|_U=\frac{\partial}{\partial z} dz$ for any conformal chart $(U,z)$ on $W,$ then the corresponding $1$-forms $\phi_j=\partial X_j,$ $j=1,2,3,$ are holomorphic on $W$. Moreover, $X$ and its pull-back metric are given by
\begin{equation}\label{eq:X}
 X=\mbox{Re} \int (\phi_1,\phi_2,\phi_3),
\end{equation}
and
\begin{equation}\label{eq:metric}
ds^2_X=\sum_{k=1}^3 |\phi_k|^2
\end{equation} respectively. As a consequence, the triple  $\Phi=(\phi_1,\phi_2,\phi_3)$ satisfies the following properties:
\begin{enumerate}[$(i)$]
\item $\phi_k$ have no real periods, $k=1,2,3$,
\item $\sum_{k=1}^3 \phi_k^2=0,$
\item $\phi_k,$ $k=1,2,3,$ have no common zeroes.
\end{enumerate}
Conversely, given a vectorial holomorphic 1-form $\Phi=(\phi_1,\phi_2,\phi_3)$ on $W$ satisfying $(i)$ to $(iii)$, then \eqref{eq:X} determines a conformal minimal immersion $X:W \to \r^3$.

The triple $\Phi$ is said to be the Weierstrass representation of $X.$
A remarkable fact is that the stereographic projection of the Gauss map of $X$ is the (meromorphic) function $g=\frac{\phi_3}{\phi_1-i \phi_2}$. In particular, the poles and zeros of $g$ coincide with the zeros of $\phi_3$ with the same order (see \cite{osserman}).

The flux of $X$ along a closed curve $\gamma$ in $W$ is defined as $p_X(\gamma)=\int_\gamma \mu(s) ds,$ where $s$ is the arclength parameter of $\gamma$ and $\mu(s)$ is the conormal vector of $X$ at $\gamma(s)$ (i.e., the unique vector such that  $\{d X(\gamma'(s)),\mu(s)\}$ is an orthonormal positive basis of the tangent plane of $X$ at $\gamma(s)$). It is easy to check that $p_X(\gamma)=\mbox{Im}\int_{\gamma} \partial X$ and that the flux map $p_X:\mathcal{ H}_1(M,\z)\to \r^3$ is a group morphism.\\


As we will deal with admissible sets, a suitable notion for {\em minimal immersions} on admissible sets will be required.
This is the aim of the following definitions.

Let $S$ be a admissible subset in $W$ and $X:S\to\R^3$ a smooth map such that
$X|_{C_S}$ is regular, (i.e., $X|_\alpha$ is a regular curve for all $\alpha \subset C_S$).
By a {\em smooth normal field} along $C_S$ respect to $X$ we mean a field $\varpi:C_S \to \R^3$  such that,
for any analytical arc $\alpha \subset C_S$,
$\varpi\circ\alpha $ is smooth, unitary and orthogonal to $(X\circ\alpha)',$
$\varpi$ extends smoothly to any open analytical arc $\beta$ in $W$ containing $\alpha$,
and $\varpi$ is tangent to $X$ on $\beta \cap S.$
The normal field $\varpi$ is said to be {\em orientable} respect to $X$ if for any component $\alpha \subset C_S$ with endpoints $P_1,$ $P_2 \in \partial(M_S),$   and for any arclength parameter $s$ along $X|_\alpha,$   the basis $B_i=\{(X|_\alpha)'(s_i), \varpi(s_i)\}$ of the tangent plane of $X|_{M_S}$ at $P_i,$  $i=1,2,$ are both positive or negative, where $s_i$ is the value of $s$ for which  $\alpha(s_i)=P_i,$ $i=1,2.$

\begin{definition}
Given a proper subset $M\subset \Ncal,$ we denote by $\mathcal{ M}(M)$ the space of maps $X:M \to\r^3$ extending as a conformal minimal immersion to an open neighborhood of $M$ in $\Ncal.$
On the other hand, for an admissible set $S$ in $W$ we call  $\mathcal{ M}^*(S)$ as the  space of marked immersions  $X_\varpi:=(X,\varpi),$ where
\begin{enumerate}
\item $X:S\to\R^3$ is a smooth map,
\item $X|_{M_S} \in \mathcal{ M}(M_S)$,
\item $X|_{C_S}$ is regular, and
\item $\varpi$ is an  orientable smooth normal field along $C_S$ respect to $X$.
\end{enumerate}

We will endow $\mathcal{ M}(M)$ (resp. $\mathcal{ M}^\ast(S)$) with the $\mathcal{ C}^0$ topology of the uniform convergence on compact subsets of $M$ (resp. uniform convergence of maps and normal fields on $S$).
\end{definition}

The notions of Weierstrass data and flux map can be also extended to immersions in $\mathcal{ M}^\ast(S)$. Indeed, given $X_\varpi \in \mathcal{ M}^*(S),$ let $\partial X_\varpi=(\hat{\phi}_j)_{j=1,2,3}$ be the complex vectorial {\em 1-form}  on $S$ given by
 $\partial X_\varpi:=\partial (X|_{M_S}),$
and for any component $\alpha$ of $C_S,$  $\partial X_\varpi:= dX(\alpha'(s)) + i \varpi(s),$
where $s$ is the arclength parameter of $X|_\alpha$ such that $\{dX (\alpha'(s_0)), \varpi(s_0)\}$ is positive provided that  $\alpha(s_0) \in \partial(M_S).$

The triple $\hat\Phi:=\partial X_\varpi$ will be called the {\em generalized Weierstrass data} of $X_\varpi.$
It is clear that $\hat{\Phi} \in \Omega_0^*(S)^3$ and is smooth.
Notice also that $\sum_{j=1}^3 \hat{\phi}_j^2=0,$ $\sum_{j=1}^3 |\hat{\phi}_j|^2$ never vanishes on $S$ and
$\mbox{Real} (\hat{\phi}_j)$ is an {\em exact} real 1-form on $S,$
$j=1,2,3,$ hence we also have $X(P)=X(Q)+\mbox{Real} \int_{Q}^P
(\hat{\phi}_j)_{j=1,2,3},$ $P,$ $Q \in S.$
In particular, since $X|_{M_S} \in \mathcal{ M} (M_S)$ then $(\phi_j)_{j=1,2,3}:=(\hat{\phi}_j|_{M_S})_{j=1,2,3}$
are the Weierstrass data of $X|_{M_S}$.

The group homomorphism
 $$p_{X_\varpi}:\mathcal{ H}_1(S,\z) \to \r^3, \quad p_{X_\varpi}(\gamma)=\mbox{Im} \int_\gamma \partial X_\varpi,$$
is said to be the {\em generalized flux map} of $X_\varpi.$ Obviously,  $p_{X_{\varpi_Y}}=p_Y|_{\mathcal{ H}_1(S,\z)}$ provided that $X=Y|_{S}$ and $\varpi_Y$ is the conormal field of $Y\in \mathcal{M}(W)$ along any curve in $C_S.$


\section{The Completeness Lemma}

Given a compact subset $M \subset \mathcal{ N}$ and a map $X=(X_1,X_2,X_3):M \to \r^3,$ we denote $\|X\|:= \max_{M} \big\{ \big(\sum_{j=1}^3 X_j^2\big)^{1/2}\big\}$ as the maximum norm of $X$ on $M.$

The following lemma concentrates most of the technical computations required in the proof of the main result of this paper.

\begin{lemma}\label{lem:lema}

Let $U,V$ be two compact regions in $\Ncal$ such that  $U\subset V^\circ$ and $V^\circ-U$ has no bounded components in $V^\circ.$ Consider a non constant harmonic function $h:V\to\r,$ an immersion $X=(X_1,X_2,X_3)\in\mathcal{M}(U)$ and a group morphism $p:\Hcal_1(V,\Z)\to\R^3$ such that $X_3=h|_{U},$ $p_X=p|_{\mathcal{H}_1(U,\z)}$ and the third coordinate of $p(\gamma)$ is $\mbox{Im}\int_\gamma \partial h,$ $\forall \gamma\in\mathcal{H}_1(V,\z).$

Then, for any $P_0\in U$ and $\epsilon>0,$ there exists $Y=(Y_1,Y_2,Y_3)\in\mathcal{M}(V)$ satisfying that:
\begin{enumerate}[$(i)$]
\item $\|Y-X\|<\epsilon$ on $U,$
\item $Y_3=h,$
\item $p_Y=p$ and
\item ${\rm dist}_Y(P_0,\partial (V)) > 1/\epsilon.$
\end{enumerate}
Here ${\rm dist}_Y$ denotes the distance on $V$ in the intrinsic metric of the immersion $Y.$
\end{lemma}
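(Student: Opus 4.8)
The plan is to reduce condition (iv) to a single application of the Approximation Lemma on a carefully chosen admissible set, exploiting that the third coordinate $X_3=h$ is governed by the fixed $1$-form $\phi_3=\partial h$, which I never deform (this takes care of (ii) automatically, and is preserved by Corollary \ref{co:f3}). Throughout I work with Weierstrass data $\Phi=(\phi_1,\phi_2,\phi_3)$ and write $\eta:=\phi_1-i\phi_2$, so that $\phi_1+i\phi_2=-\phi_3^2/\eta$ and the induced length element is $ds^2=\tfrac12\big(|\eta|+|\phi_3|^2/|\eta|\big)^2$. The crucial elementary point is $ds^2\ge\tfrac12|\eta|^2$: the intrinsic length can be made arbitrarily large just by forcing $|\eta|$ to be large, and a L\'opez--Ros type deformation $\eta\mapsto\lambda\eta$, with $\lambda$ nonvanishing and $\approx 1$ on $U$ but huge elsewhere, does this while leaving $\phi_3=\partial h$ untouched.

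First I would fix a \emph{labyrinth}. Since $V^\circ-U$ has no bounded components in $V^\circ$, I can choose a compact ``frame'' $K$ with $U\subset K^\circ\subset K\subset V^\circ$ separating $U$ from $\partial(V)$, and lay down inside $K-U$ a finite family $\mathcal L$ of pairwise disjoint analytic Jordan arcs forming a comb/spiral net, in the spirit of Jorge--Xavier and Nadirashvili but adapted to the collar of an arbitrary open Riemann surface and routed to avoid the isolated zeros of $\partial h$. The arcs are chosen so that: (a) $S:=U\cup\mathcal L$ is admissible in $V$ and a strong deformation retract of $V$, whence the inclusion induces an isomorphism $\Hcal_1(S,\Z)\cong\Hcal_1(V,\Z)$ (this is what lets the arcs also carry the extra homology of $V$ relative to $U$); and (b) the complement of $\mathcal L$ in $K-U$ consists of corridors so narrow and convoluted that any arc in $V$ joining $P_0$ to $\partial(V)$ either crosses the expensive arcs of $\mathcal L$ or is trapped into traversing a long Euclidean detour inside the corridors.

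Next I extend $X$ to a marked immersion $X_\varpi\in\Mcal^\ast(S)$: on $U$ the generalized Weierstrass data is $\partial X$, while on each arc $\alpha\subset\mathcal L$ I prescribe the curve $X|_\alpha$ and the orientable normal field $\varpi$ so that the third component of $\partial X_\varpi$ equals $\partial h|_\alpha$ (forced by $X_3=h$ and the value of $\varpi_3$ along $\alpha$) and the tangential datum $\eta$ is enormous, making the length along $\alpha$ exceed $2/\epsilon$. Using the freedom in the first two components under $\sum_j\hat\phi_j^2=0$, and the fact that $h$ and $p$ are already defined on all of $V$, I arrange simultaneously $\hat\phi_3=\partial h$ and $p_{X_\varpi}=p|_{\Hcal_1(S,\Z)}$; the compatibility $\mathrm{Im}\int_\gamma\partial h=\text{third coordinate of }p(\gamma)$ is automatic. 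I then apply Corollary \ref{co:f3} to approximate $\hat\Phi$ uniformly on $S$ by $\Phi_n\in\Omega_0(V)^3$ with $\sum\phi_{j,n}^2=0$, $\sum|\phi_{j,n}|^2$ nonvanishing, $\phi_{3,n}=\partial h$, and $\Phi_n-\hat\Phi$ exact on $S$, and set $Y_n:=\mathrm{Re}\int\Phi_n$. Exactness gives $\int_\gamma\Phi_n=\int_\gamma\hat\Phi$ on $\Hcal_1(S,\Z)$; since $\mathrm{Re}\,\hat\phi_j$ is exact this kills the real periods (so $Y_n$ is single-valued on $V$) and matches the imaginary ones, whence $p_{Y_n}=p$ via the homology isomorphism, proving (iii); $\phi_{3,n}=\partial h$ yields (ii); uniform closeness on $S\supset U$ gives (i); and, since $ds_{Y_n}$ is close on $\mathcal L$ to the prescribed metric while $\sum|\phi_{j,n}|^2\ge|\partial h|^2$ is bounded below on the corridors, every path from $P_0$ to $\partial(V)$ has $Y_n$-length $>1/\epsilon$ for $n$ large, giving (iv).

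The main obstacle is the second step: designing the labyrinth and proving the length estimate. Forcing $|\eta|$ to be huge on the $1$-dimensional arcs is easy, but the metric \emph{between} the arcs is produced by an uncontrolled holomorphic extension, so one cannot simply declare it large there. The estimate must therefore be geometric--combinatorial: the comb/spiral must be fine and deep enough that a path avoiding the expensive arcs is forced to run a long Euclidean distance through the corridors, where the metric, though not enlarged, is bounded below after the construction. Balancing the number and depth of the teeth against the corridor widths -- so that both the ``crossing'' and the ``detour'' alternatives cost more than $1/\epsilon$ -- together with verifying that the resulting $S$ is admissible and a strong deformation retract of $V$, is the technical heart of the argument.
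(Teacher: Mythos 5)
Your reduction of items $(i)$--$(iii)$ to one application of Corollary \ref{co:f3} on an admissible set $S=U\cup\mathcal{L}$ is sound, and your observation that $ds^2\geq|\phi_3|^2=|\partial h|^2$ gives a free lower bound for the metric away from the labyrinth is exactly the right ingredient (the paper uses it too, on a compact annulus chosen to avoid the zeros of $\partial h$). The gap is in the completeness estimate $(iv)$, and it is fatal to a labyrinth made of \emph{one-dimensional} arcs. The Approximation Lemma and Corollary \ref{co:f3} give convergence that is uniform \emph{on $S$ only}; hence the enlarged metric you prescribe along the arcs of $\mathcal{L}$ is, after approximation, guaranteed to be large only on the arcs themselves together with some open neighborhood of \emph{uncontrolled} width (depending on the approximating immersion). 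A path from $P_0$ to $\partial(V)$ that crosses an arc transversally accumulates intrinsic length only over its intersection with that uncontrolled neighborhood, which can be arbitrarily short: crossing your ``expensive'' arcs costs essentially nothing. Consequently the dichotomy you rely on (``crossing costs more than $1/\epsilon$, detour costs more than $1/\epsilon$'') collapses: a path can march straight across the comb, paying roughly $\mu$ times the Euclidean width of the frame, a quantity bounded independently of all parameters. Making the arcs themselves long (length $>2/\epsilon$) only penalizes paths that travel \emph{along} them, not \emph{across} them. This is precisely why Jorge--Xavier-type labyrinths are built from two-dimensional compact pieces.

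The paper's proof avoids this by splitting the two tasks you tried to merge. Arcs are used only where no length estimate is needed: the proof is an induction on $-\chi(V^\circ-U)$, and each inductive step attaches a single Jordan arc $\gamma$ carrying one new homology class, extends $X$ to a marked immersion $F_\varpi\in\Mcal^\ast(U\cup\gamma)$ with third component of $\partial F_\varpi$ equal to $\partial h$ and correct flux on $\hat\gamma$, and applies Corollary \ref{co:f3} to produce an immersion on a compact two-dimensional tubular neighborhood $W'$ of $U\cup\gamma$ satisfying only $(i)$--$(iii)$. Completeness is achieved entirely in the base case $\chi(V^\circ-U)=0$, where $V^\circ-U$ is a union of annuli: there the labyrinth consists of two-dimensional compact sets $\Kcal_{j,n}$ on which one prescribes genuinely holomorphic L\'opez--Ros data $\big(\tfrac12(\tfrac1M-M)\phi_3,\,\tfrac{i}{2}(\tfrac1M+M)\phi_3,\,\phi_3\big)$ with $M>2N^4$; uniform approximation on these sets of definite width preserves the bound $ds_Y^2>N^8\mu^2|dz_j|^2$ on them, and combined with $ds_Y^2\geq|\phi_3|^2>\mu^2|dz_j|^2$ on the rest of the annulus this yields the classical estimate $\mathrm{length}_{ds_Y^2}(\alpha)>\rho_j\mu N$ for every crossing curve. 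If you want to salvage your one-step scheme, you would have to thicken your arcs into compact regions and put holomorphic (not merely marked-immersion) data with the L\'opez--Ros factor on the thickened pieces before approximating --- at which point you have essentially reconstructed the paper's two-step argument.
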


\begin{proof}

We will prove this lemma by induction on (minus) the Euler characteristic of $V^\circ-U$ (recall that, since we are assuming that $V^\circ-U$ has no bounded components in $V^\circ,$ then $\chi(V^\circ-U)\leq 0$). The induction process is enclosed in the following two claims.

\begin{claim}
The lemma holds if $\chi(V^\circ-U)=0.$
\end{claim}
\begin{proof}
The argument we use now is analogous to the one employed in \cite[Lemma 1]{af}. Write $V^\circ-U=\cup_{j=1}^k A_j$, where $A_j$ are pairwise disjoint open annuli.
On each component $A_j$ we define the following labyrinth of compact sets. Let $z_j:A_j\to\C$ be a conformal parametrization,
and consider a compact region $C_j\subset A_j$ such that $C_j$ contains no zeros of $\partial h,$ $z_j(C_j)$ is a compact annulus of radii $r_j$ and $R_j,$ where $r_j<R_j,$ and $z_j(C_j)$ contains the homology of $z_j(A_j).$ Write $\phi_3=\partial X_3=f_j(z_j)dz_j$, with $|f_j|>0$ on $C_j.$ Let $\mu$ be a positive constant with $$\mu<\min\{|f_j(P)|\;|\; P\in C_j,\; j=1,\ldots,k\}.$$
Fix a natural $N$ (which will be specified later) such that
$2/N<\min\{R_j-r_j\;|\;j=1,\ldots,k\}.$ For any $n\in\{1,\ldots,2N^2\}$, consider the compact set in $C_j:$
\[
\Kcal_{j,n}=\left\{ p\in A_j\; \left|\; s_n+\frac1{4N^3}\leq |z_j(p)|\leq
s_{n-1}-\frac1{4N^3},\right.\right.
\left.\frac1{N^2}\leq {\rm arg}((-1)^{n}z_j(p))
\leq 2\pi-\frac1{N^2}\right\},
\]
where $s_n:=R_j-n/N^3$. Then, define
$$
\Kcal_j=\bigcup_{n=1}^{2N^2}\Kcal_{j,n}\qquad \mbox{and}\qquad  \Kcal=\bigcup_{j=1}^k \Kcal_j.
$$

Define $\Phi\in \Omega_0(\overline{U}\cup \Kcal)^3$ by
$${\Phi}|_{ U}=\partial X, \qquad \Phi|_{\Kcal}= \Big(\frac{1}{2}(\frac{1}{M}  - M )\,\phi_3,\, \frac{i}{2}(\frac{1}{M} + M)\,\phi_3,\, \phi_3\Big), $$
where $M>2N^4$ is a constant.

By Corollary \ref{co:f3} applied to $S=U\cup \mathcal{K},$ $\Phi,$ and an open tubular neighborhood of $V,$ we can infer the existence of $\Psi\in\Omega_0(V)^3$ giving rise to a well-defined conformal minimal immersion
$Y=(Y_1,Y_2,Y_3)\in\mathcal{ M}(V)$ fulfilling $(i),$ $(ii)$ and $(iii),$ and whose metric $ds^2_Y$ satisfies
\begin{equation}\label{eq:aprox}
ds_Y^2 > \frac{1}{4}\big(\frac{1}{M} + M\big)^2\mu^2 |dz_j|^2 > N^8\mu^2 |dz_j|^2 \qquad \mbox{on}\quad \Kcal_j, \; j=1,\ldots,k.
\end{equation}

To finish the claim it remains to check $(iv)$. Taking into account that $ds_Y^2\geq |\phi_3|^2 >\mu^2|dz_j|^2 $ on $C_j$, and \eqref{eq:aprox}, it is not hard to check that there exists a positive constant $\rho_j$ depending neither on $\mu$ nor $N$ such that
$$
\mbox{length}_{ds_Y^2}(\alpha)>\rho_j\cdot \mu\cdot N
$$
for any $\alpha$ curve in $C_j$ joining the two components of $\partial(C_j)$. Thus, we can choose $N$ large enough so that
$\rho_j\cdot \mu\cdot N>1/\epsilon$ for any $j=1,\ldots,k$. In particular, $(iv)$ is achieved.
\end{proof}


\begin{claim} Let $n>0$. Assume that the Lemma holds if $-\chi(V-U^\circ)<n$. Then it also holds for $-\chi(V-U^\circ)=n$.
\end{claim}
\begin{proof}
 Since $-\chi(V-U^\circ)>0$, there exists $\hat\gamma\in\Hcal_1(V,\Z)-\Hcal_1(U,\Z)$ intersecting $V-U^\circ$
in a Jordan arc $\gamma$ with endpoints $P_1,P_2\in\partial (U)$ and otherwise disjoint from $\partial (U),$ and such that $S:=U\,\cup\,\gamma$ is an admissible set in an open tubular neighborhood $W$ of $V$ in $\Ncal.$ Moreover, we take $\hat{\gamma}$ so that $\partial h$ never vanishes on $\gamma.$

Take $F_\varpi\in\Mcal^\ast(S)$, $F=(F_1,F_2,F_3)$, satisfying
$F|_U=X$, $F_3=h|_S$, the third coordinate of $\partial F_\varpi$ is $\partial h|_S,$  and $p_{F_\varpi}(\hat\gamma)=p(\hat\gamma)$.

By Corollary \ref{co:f3} applied to the (generalized) Weierstrass data of $F_{\varpi},$ $S$ and $W,$ we obtain a compact tubular neighborhood  $W'$ of $S$ in $V^\circ$ and $Z=(Z_1,Z_2,Z_3)\in\mathcal{M}(W')$ such that
$\|Z-X\|<\epsilon/2$ on $U$, $p_{Z}=p|_{\mathcal{H}_1(W',\z)}$, and $Z_3=h|_{W'}$. Since $-\chi(V-(W')^\circ)<n$, the induction hypothesis applied to $Z$ and $\epsilon/2$ gives the existence of an immersion $Y$ satisfying the conclusion of the Lemma.
\end{proof}

The proof is done.
\end{proof}


\section{Main Results}
In this section we prove the results stated in the introduction and obtain some corollaries.
\begin{theorem}\label{th:harmonic}
Let $h:\Ncal\to\R$ be a non constant harmonic function and $p:\Hcal_1(\Ncal,\Z)\to\R^3$ a group morphism such that the third coordinate of $p(\gamma)$ coincides with $\mbox{Im}\int_{\gamma} \partial h$, for all $\gamma\in\Hcal_1(\Ncal,\Z)$.\\
Then there exists a complete conformal minimal immersion $X=(X_1,X_2,X_3):\Ncal\to\R^3$ with $X_3=h$ and $p_X=p$.
\end{theorem}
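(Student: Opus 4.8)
The plan is to obtain $X$ as the limit of a sequence of conformal minimal immersions defined on an increasing exhaustion of $\Ncal$ by compact regions, using Lemma~\ref{lem:lema} as the inductive engine. First I would fix a point $P_0\in\Ncal$ and choose an exhaustion $U_1\subset U_2^\circ\subset U_2\subset\cdots$ of $\Ncal$ by compact regions with $\Ncal=\bigcup_n U_n$, where each $U_{n+1}^\circ-U_n$ has no bounded components in $U_{n+1}^\circ$; such an exhaustion exists for any open Riemann surface by standard Stein/Runge theory and the classification of noncompact surfaces. To start the induction I need an initial immersion $X^{(1)}\in\Mcal(U_1)$ with $X^{(1)}_3=h|_{U_1}$ and $p_{X^{(1)}}=p|_{\Hcal_1(U_1,\Z)}$; this is the base case and should be arranged by prescribing generalized Weierstrass data on $U_1$ whose third component is $\partial h$, exactly as in the initialization step of Lemma~\ref{lem:lema}'s second claim, so that the compatibility condition on the third coordinate of $p$ guarantees realizability.

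The inductive step is then a direct application of Lemma~\ref{lem:lema} with $(U,V)=(U_n,U_{n+1})$, the harmonic function $h|_{U_{n+1}}$, the morphism $p|_{\Hcal_1(U_{n+1},\Z)}$, and a tolerance $\epsilon_n>0$ to be chosen. The hypotheses of the Lemma are met: the compatibility between $p$ and $h$ on the third coordinate is inherited from Theorem~\ref{th:harmonic}, and $p_{X^{(n)}}=p|_{\Hcal_1(U_n,\Z)}$, $X^{(n)}_3=h|_{U_n}$ hold by the previous step. The Lemma produces $X^{(n+1)}\in\Mcal(U_{n+1})$ with $\|X^{(n+1)}-X^{(n)}\|<\epsilon_n$ on $U_n$, with $X^{(n+1)}_3=h$, $p_{X^{(n+1)}}=p|_{\Hcal_1(U_{n+1},\Z)}$, and with the crucial intrinsic distance estimate ${\rm dist}_{X^{(n+1)}}(P_0,\partial(U_{n+1}))>1/\epsilon_n$.

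The final step is to take the limit and verify its properties. Choosing $\sum_n\epsilon_n<\infty$ makes $\{X^{(n)}\}$ uniformly Cauchy on compact subsets of $\Ncal$, so it converges to a map $X:\Ncal\to\R^3$; by Harnack-type interior estimates for the derivatives (the $\phi_j^{(n)}=\partial X^{(n)}_j$ converge uniformly on compacta), $X$ is a conformal harmonic map. Property $(ii)$ gives $X_3=h$ immediately, and $(iii)$ passes to the limit since each compact cycle lies in some $U_n$, yielding $p_X=p$. Two points require the standard care that makes this the main obstacle: first, confirming that $X$ is an \emph{immersion} (nondegeneracy of the limit metric), which follows because the distance estimate $(iv)$ forces the pullback metrics to stay bounded below away from collapse — more precisely, a suitable choice of the $\epsilon_n$ relative to the immersion data keeps $\sum_{j}|\phi_j^{(n)}|^2$ uniformly bounded below on each fixed compact set; and second, \emph{completeness}, which I would deduce from $(iv)$ by a diagonal argument: any divergent path in $\Ncal$ eventually exits every $U_n$, and the estimate ${\rm dist}_{X^{(n+1)}}(P_0,\partial(U_{n+1}))>1/\epsilon_n$ together with the controlled perturbation $\|X^{(n+1)}-X^{(n)}\|<\epsilon_n$ on $U_n$ shows the intrinsic distance from $P_0$ to the ideal boundary is infinite. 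Carefully interleaving the convergence tolerances with the completeness radii — so that the surviving distances are not destroyed by later perturbations — is the delicate bookkeeping step, but it is routine once the Lemma is in hand.
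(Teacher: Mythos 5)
Your plan is essentially the paper's own proof: the same exhaustion by compact regions, the same recursive application of Lemma~\ref{lem:lema} with summable tolerances, and the same limiting argument, so the overall approach is correct. The one genuine gap is the base case. The paper takes the first region $V_1$ of the exhaustion to be \emph{simply connected}, precisely so that the initial immersion can be written down explicitly from the Weierstrass data $\phi_3=(\partial h)|_{V_1}$, $g=\phi_3/dz$, with no period problem to solve. Your version allows an arbitrary $U_1$ and asserts that the compatibility condition on the third coordinate of $p$ ``guarantees realizability''; this is unjustified when $U_1$ has nontrivial homology, since one must then produce $\phi_1,\phi_2$ whose real periods vanish and whose imaginary periods realize the first two coordinates of $p$ --- which is essentially the flux-prescription problem the whole induction is designed to solve, not something that follows from the compatibility hypothesis alone. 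Moreover, the ``initialization step'' of the second claim of Lemma~\ref{lem:lema} that you invoke extends an \emph{already existing} immersion across an arc (and uses Corollary~\ref{co:f3} to kill the new periods); it does not create an immersion from scratch. The fix is one line --- choose $U_1$ a closed disk, as the paper does --- but as written the starting point of your induction is not established.

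Two further comparisons, not gaps. For non-degeneracy of the limit, the paper has a cleaner route than your quantitative lower bounds on the metric: by Hurwitz's theorem and the fact that each $\partial Y_n$ never vanishes, the limit either degenerates to a constant map or has no branch points, and degeneration is impossible because the third coordinate is the non-constant $h$. Finally, your completeness bookkeeping (interleaving the tolerances so that later perturbations do not destroy earlier intrinsic-distance gains, upgrading $\mathcal{C}^0$ closeness of harmonic maps to $\mathcal{C}^1$ control on compact subsets) is exactly what the paper leaves implicit when it deduces completeness from its properties $b)$ and $c)$; your write-up is, if anything, more candid about this step than the paper itself.
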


\begin{proof}
Consider an exhaustive sequence $\{V_n\}_{n\in\N}\subset\Ncal$ of compact regions such that $V_1$ is simply connected,
 ${V_{n-1}}\subset V_{n}^\circ $, and $V_{n}^\circ  -{V_{n-1}}$ has no bounded components in $V_{n}^\circ$, $n\geq 2 $.

Let $Y_1\in\mathcal{ M}(V_1)$ be the conformal minimal immersion with Weierstrass data given by
$\phi_3=(\partial h)|_{V_1}$ and $g=\phi_3/dz$, where $z$ is a conformal parameter on $V_1$.

Fix a point $P_0\in V_1^\circ,$ 
 and apply recursively Lemma \ref{lem:lema} to obtain a sequence $\{Y_n\}_{n\in\N}$, $Y_n\in\mathcal{ M}(V_n)$ satisfying that:
\begin{enumerate}[$a)$]
\item $||Y_n-Y_{n-1}||<1/n^2$ on $V_{n-1},$
\item ${\rm{dist}}_{Y_n}(p_0,\partial(V_n))>n^2,$
\item $p_{Y_n}=p|_{\mathcal{H}_1(V_n,\z)}$, and
\item the third coordinate function of $Y_n$ coincides with $h|_{V_n}$,
\end{enumerate}
 for all $n\in\N$. Here ${\rm{dist}}_{Y_n}$ denotes the distance on $V_n$ in the intrinsic metric of the immersion $Y_n.$
Since $\Ncal=\cup_{n\in\N} V_n$, property $a)$ gives that  $\{Y_n\}_{n\in\N}$ converges to a harmonic limit map
$X=(X_1,X_2,X_3):\Ncal\to\R^3$ uniformly on compact sets (Harnack Theorem).
Moreover, from Hurwitz Theorem and the fact that $\partial Y_n$ never vanishes we infer that either $X$ degenerates on a point or has no branch points. From $d)$ follows $X_3=h$ which is non constant and so the first possibility can not occur. On the other hand, properties $b)$ and $c)$ give that $Y$ is complete and $p_X=p,$ respectively.
\end{proof}


Any open Riemann surface carries regular harmonic functions, that is to say, harmonic functions with never vanishing differential. As a consequence, any open Riemann surface admits a conformal complete minimal immersion in $\r^3$ whose Gauss map misses to antipodal values. For completeness we include a detailed proof of all these facts based  in Corollary \ref{co:divisor}.

\begin{theorem}\label{th:sinceros}
Let $p:\Hcal_1(\Ncal,\Z)\to\R^3$ be a group morphism.

Then there exists a complete conformal minimal immersion $X:\Ncal\to\R^3$ such that its meromorphic Gauss map has neither zeros nor poles and $p_X=p.$
\end{theorem}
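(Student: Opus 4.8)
The plan is to reduce the statement to Theorem \ref{th:harmonic}. Recall that the stereographic projection $g=\phi_3/(\phi_1-i\phi_2)$ of the Gauss map has its zeros and poles exactly at the zeros of $\phi_3=\partial X_3$, with the same orders. Hence the Gauss map of a conformal minimal immersion $X=(X_1,X_2,X_3)$ has neither zeros nor poles (equivalently, it omits the two antipodal values $g=0,\infty$) if and only if $\phi_3$ never vanishes, i.e. if and only if its third coordinate function $X_3$ is a \emph{regular} harmonic function, one with nowhere vanishing differential. Write $q:\Hcal_1(\Ncal,\Z)\to\R$ for the third coordinate of the prescribed morphism $p$. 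If I can produce a regular harmonic function $h:\Ncal\to\R$ whose flux matches $q$, that is $\mbox{Im}\int_\gamma\partial h=q(\gamma)$ for all $\gamma$, then $h$ and $p$ satisfy the compatibility hypothesis of Theorem \ref{th:harmonic}; the immersion $X$ produced by that theorem has $X_3=h$, $p_X=p$ and $\phi_3=\partial h$ nowhere zero, so it is exactly the surface sought (the two antipodal points can be placed anywhere by a rotation).

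Everything therefore reduces to the following existence statement: for every group morphism $q:\Hcal_1(\Ncal,\Z)\to\R$ there is a nowhere vanishing holomorphic $1$-form $\eta$ on $\Ncal$ with purely imaginary periods $\int_\gamma\eta=i\,q(\gamma)$. Indeed, vanishing of the real periods makes the real harmonic $1$-form $2\,\mbox{Re}(\eta)$ exact, so its primitive $h$ is a well-defined regular harmonic function with $\partial h=\eta$ and flux $q$. To build $\eta$ I would imitate the exhaustion scheme of the proof of Theorem \ref{th:harmonic}. Fix an exhaustion $\{V_n\}$ of $\Ncal$ by compact regions with $V_1$ simply connected, $V_{n-1}\subset V_n^\circ$ and $V_n^\circ-V_{n-1}$ without bounded components. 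On a neighbourhood of the simply connected set $V_1$ take a zero-free holomorphic $1$-form, say $\eta_1=dz$. Inductively, given a zero-free $\eta_{n-1}$ whose periods over the cycles of $V_{n-1}$ are $i\,q(\cdot)$, extend it across the arc $\hat\gamma$ added in passing from $V_{n-1}$ to $V_n$, prescribing there the value $i\,q(\hat\gamma)$ on the new generator of $\Hcal_1(V_n,\Z)$; this yields a smooth zero-free $(1,0)$-form on an admissible set $S=V_{n-1}\cup\hat\gamma$. Then apply Corollary \ref{co:divisor} (to a null triple whose third entry is this form) to approximate it on $V_n$ by a holomorphic $1$-form $\eta_n\in\Omega_0(V_n)$ that is still nowhere zero, exactly as in the step $\chi(V-U^\circ)=n$ of Lemma \ref{lem:lema} but with Corollary \ref{co:divisor} in place of Corollary \ref{co:f3}. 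Because the approximants differ by \emph{exact} forms on $S$, all periods over cycles of $V_{n-1}$ are preserved verbatim, and the new period equals the prescribed $i\,q(\hat\gamma)$; choosing the error $<1/n^2$ on $V_{n-1}$ makes $\{\eta_n\}$ converge uniformly on compacta to a holomorphic $1$-form $\eta$ with periods $i\,q(\cdot)$.

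The main obstacle is to guarantee that the limit $\eta$ is \emph{still} nowhere vanishing while keeping the imaginary periods exact throughout. Non-vanishing at each finite stage is precisely what Corollary \ref{co:divisor} delivers, via the factorization $\phi_{3,n}=e^{f_n}\psi_n$ with $\psi_n$ zero-free recorded in the Remark following it; the passage to the limit is then handled by Hurwitz's theorem, which forces a locally uniform limit of zero-free holomorphic $1$-forms to be either identically zero or zero-free, the former being excluded since $\eta$ is close to $\eta_1\neq0$ on $V_1$. Exactness of the successive corrections keeps every already-fixed period rigid, so no period drift occurs and $\int_\gamma\eta=i\,q(\gamma)$ survives in the limit. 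With such $\eta$ in hand, its primitive $h$ is the required regular harmonic function with flux $q$, and Theorem \ref{th:harmonic} then furnishes the complete conformal minimal immersion $X$ with $X_3=h$, $p_X=p$ and zero-free $\phi_3$, completing the proof.
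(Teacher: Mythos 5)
Your proposal is correct and follows essentially the same route as the paper's own proof: the paper also reduces Theorem \ref{th:sinceros} to Theorem \ref{th:harmonic} by an exhaustion argument (one handle at a time) that uses Corollary \ref{co:divisor} on admissible sets $S=V_n\cup\gamma$, preserves periods because the approximation errors are exact, and secures non-vanishing of the limit $1$-form via Hurwitz's theorem. The only cosmetic difference is bookkeeping: the paper carries a full auxiliary (not necessarily complete) immersion $Y\in\mathcal{M}(\Ncal)$ with flux $p$ and nowhere-vanishing $\phi_3$ through the induction and then sets $h=\mbox{Re}\int\phi_3$, whereas you carry only the third-component $1$-form $\eta$ (wrapped in a null triple when invoking the corollary) with purely imaginary prescribed periods, before both arguments conclude by applying Theorem \ref{th:harmonic} to $(h,p)$.
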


\begin{proof}

Take $\{V_n\}_{n\in\N}\subset \Ncal$ an exhaustive sequence of compact regions
such that $V_1$ is simply connected, ${V_n}\subset V_{n+1}^\circ,$ $V_{n+1}^\circ-V_n$ has no bounded components and $\chi(V_{n+1}^\circ-V_n)=-1.$ Let $F\in\mathcal{ M}(V_1)$ be a conformal minimal immersion with Weierstrass data
$\Psi=(\psi_{1},\psi_{2},\psi_{3})$ such that $\psi_{3}$ never vanishes on $V_1.$

Fix $\epsilon>0.$ The key step in the proof is the construction of a sequence $\{Y_n\}_{n\in\N}$, $Y_n\in\mathcal{ M}(V_n)$ with Weierstrass data $\Phi_n=\{(\phi_{j,n})_{j=1,2,3}\}$ satisfying that:
\begin{enumerate}[$a)$]
\item $\|Y_{n} - Y_{n-1}\|<\epsilon/n^2$ on $V_{n-1}$,
\item $p_{Y_n}=p|_{\mathcal{H}_1(V_n,\z)}$ and
\item $\phi_{3,n}$ never vanishes on $V_n,$
\end{enumerate}
for all $n\geq 2$.

Indeed, choose $Y_1=F$ and assume that we have constructed $Y_1,\dots,Y_n.$
Then the immersion $Y_{n+1}$ is defined as follows.
Let $\hat\gamma\in\Hcal_1(V_{n+1},\Z)-\Hcal_1(V_n,\Z)$ intersecting $V_{n+1}-V_n^\circ$
in a Jordan arc $\gamma$ with endpoints $P_1,P_2\in\partial (V_n)$ and otherwise disjoint from $\partial (V_n),$ and such that $S:=V_n\,\cup\,\gamma$ is an admissible set in an open tubular neighborhood $W$ of $V_{n+1}$ in $\Ncal.$ Then extend $Y_n$ to a marked immersion $Z_\varpi\in\mathcal{ M}^\ast(S)$ satisfying that $p_{Z_\varpi}=p|_{\mathcal{H}_1(S,\z)}$
and the third coordinate of $\partial Z_\varpi$ never vanishes on $\gamma$. Applying Corollary \ref{co:divisor}
to the generalized Weierstrass data of $Z_\varpi,$ $S$ and $W,$
and integrating the resulting  $1$-forms we get $Y_{n+1}\in\mathcal{ M}(V_{n+1})$
satisfying the desired conditions.

By $a),$ Harnack theorem and Hurwitz theorem, the sequence $\{Y_n\}_{n\in\N}$ converges uniformly on compact sets to a conformal minimal immersion $Y:\Ncal\to\R^3,$ provided that $\epsilon$ is small enough. Label $\Phi=(\phi_1,\phi_2,\phi_3)$ as its Weierstrass data.
It is clear that $p=p_Y$, let us check now that $\phi_3$ never vanishes.
Indeed, assume $\phi_3$ has a zero at a point in $V_{n_0},$ for $n_0\in\n.$ Since $\phi_{3,n}$ never vanishes in $V_{n_0}$ for all $n\geq n_0,$ then $\phi_3$ vanishes identically on $V_{n_0}$ (Hurwitz theorem) and so in $\Ncal$. However, from $a)$ we infer that $\|Y-Y_1\|\leq\epsilon\sum_{n=1}^\infty 1/n^2=\epsilon\pi^2/6$ and so the third coordinate of $Y$ is non constant provided that $\epsilon$ is small enough, a contradiction.

Set $h:\Ncal\to\r$ by $h(P)=\mbox{Re}\int_{P_0}^P\phi_3$, where $P_0$ is an arbitrary fixed point in $\Ncal.$ Applying Theorem \ref{th:harmonic} to $h$ and $p$ we obtain a {\em complete} conformal minimal immersion $X=(X_1,X_2,X_3):\mathcal{N}\to\r^3$ such that $p_X=p$ and $X_3=h.$ As $\partial X_3=\phi_3$ never vanishes on $\Ncal$ then the meromorphic Gauss map of $X$ has neither zeros nor poles, concluding the proof.
\end{proof}

Open Riemann surfaces carrying non constant bounded harmonic functions are hyperbolic, but the reciprocal is false in general. However, in the case of finite topology  both statements are equivalent. Even more, if $\mathcal{N}$ is biholomorphic to a compact Riemann surface minus a finite collection of at least two pairwise disjoint closed discs, then there exists proper harmonic maps $h:\mathcal{N} \to (0,1).$ As a consequence,

\begin{corollary} \label{co:slab1}
Any of the following statements holds:
\begin{enumerate}
\item[(a)]  $\mathcal{N}$ carries a non constant bounded harmonic function if and only if there exists a conformal complete non flat minimal immersion of $\mathcal{N}$ in a horizontal slab of $\r^3.$
\item[(b)] If $\mathcal{N}$ is hyperbolic and of finite topology, then there exists a conformal complete non flat minimal immersion of $\mathcal{N}$ in a horizontal slab of $\r^3.$
\item[(c)] If $\mathcal{N}$ is biholomorphic to a compact Riemann surface minus a finite collection of at least two pairwise disjoint closed discs, then $\mathcal{N}$ admits a proper conformal complete non flat minimal immersion in an open horizontal slab of $\r^3.$
\end{enumerate}
In addition, in any case the first two coordinates of the flux map can be prescribed.
\end{corollary}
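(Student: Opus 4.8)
The plan is to treat part (a) as the heart of the matter and to deduce (b) and (c) from it using the two analytic facts recalled just before the statement. Since (a) is an equivalence, I would prove two implications. For the implication ``immersion $\Rightarrow$ harmonic function'', I would start from a complete non-flat conformal minimal immersion $X=(X_1,X_2,X_3):\Ncal\to\r^3$ with image in a slab $\{a<x_3<b\}$. Then $X_3$ is harmonic and bounded, and it cannot be constant: otherwise $X(\Ncal)$ would lie in a horizontal plane and $X$ would be flat. Hence $\Ncal$ carries a non-constant bounded harmonic function.

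For the converse, given a non-constant bounded harmonic function $h$ on $\Ncal$, I would fix a group morphism $p:\Hcal_1(\Ncal,\Z)\to\r^3$ whose third component is $\gamma\mapsto\mbox{Im}\int_\gamma\partial h$ and whose first two components are assigned freely on a basis of $\Hcal_1(\Ncal,\Z)$ (this freedom already accounts for the closing remark on prescribing the first two coordinates of the flux). Theorem \ref{th:harmonic} then produces a complete conformal minimal immersion $X$ with $X_3=h$ and $p_X=p$; boundedness of $h$ places $X$ in a horizontal slab.

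The main obstacle is verifying that the immersion supplied by Theorem \ref{th:harmonic} is non-flat, since that theorem does not assert it. Here I would argue via conformal type rather than Weierstrass data. Any surface carrying a non-constant bounded harmonic function is hyperbolic, so in the converse of (a) the surface $\Ncal$ is hyperbolic. A complete flat conformal minimal immersion would endow $\Ncal$ with a complete flat conformal metric; lifting this metric to the universal cover gives a simply connected, complete, flat surface, which is necessarily isometric, and hence conformally equivalent, to $\c$. This contradicts the hyperbolicity of $\Ncal$ (universal cover $\d$). Therefore every complete conformal minimal immersion of a hyperbolic $\Ncal$ is automatically non-flat, which closes the converse of (a).

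Finally, for (b) I would invoke the equivalence stated for the finite-topology case to conclude that a hyperbolic $\Ncal$ carries a non-constant bounded harmonic function, and then apply (a). For (c), the stated existence of a proper harmonic map $h:\Ncal\to(0,1)$ furnishes a non-constant bounded $h$; applying Theorem \ref{th:harmonic} as above yields a complete non-flat $X$ with $X_3=h$, whose image lies in the open slab $\{0<x_3<1\}$. Properness of $X$ would then follow from properness of $X_3=h$: the $x_3$-projection of any compact subset of the open slab is contained in some $[\delta,1-\delta]\subset(0,1)$, so its $X$-preimage is a closed subset of the compact set $h^{-1}([\delta,1-\delta])$, hence compact.
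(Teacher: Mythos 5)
Your proof is correct and follows essentially the same route as the paper: the corollary is derived directly from Theorem \ref{th:harmonic} (applied to a bounded, respectively proper, non constant harmonic function, with the first two flux coordinates chosen freely) together with the classical facts on conformal type recalled just before the statement. The details you supply that the paper leaves implicit --- non-flatness via the contradiction between hyperbolicity and a complete flat conformal metric, and properness of $X$ in case (c) deduced from properness of $h$ --- are sound.
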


If $h$ is the real part of a non constant holomorphic function and $p=0,$ Theorem \ref{th:harmonic} also gives that:

\begin{corollary} \label{co:slab2}
Any of the following statements holds:
\begin{enumerate}
\item[(d)] The following assertions are equivalent:
\begin{itemize}
\item $\mathcal{N}$ carries a non constant bounded holomorphic function.
\item There exists a full\footnote[1]{A complex curve in $\c^n$ is said to be full if it is not contained in a linear complex subspace.} complete null immersion of $\mathcal{N}$ in $\c^2\times\d.$
\item There exists a full complete holomorphic immersion of $\mathcal{N}$ in $\c\times\d.$
\end{itemize}
\item[(e)] If $\mathcal{N}$ is hyperbolic and of finite topology, then there exists a full complete null immersion of $\mathcal{N}$ in $\c^2\times\d$ and a full complete holomorphic immersion of $\mathcal{N}$ in $\c\times\d.$
\item[(f)] If $\mathcal{N}$ admits a proper holomorphic function into the unit disk, then $\mathcal{N}$ admits a full proper complete minimal immersion in $\c^2\times D$ and a full proper complete holomorphic immersion in $\c\times D,$ where $D$ is any simply connected planar domain (the case $D=\c$ is proved in \cite{al}).
\end{enumerate}
\end{corollary}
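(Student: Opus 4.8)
The plan is to deduce all three parts from Theorem \ref{th:harmonic} by manufacturing null curves whose last coordinate is the prescribed holomorphic function. Given a non constant holomorphic $f\colon\Ncal\to\c$, I would set $h=\mbox{Re}(f)$ and apply Theorem \ref{th:harmonic} with $p=0$; this is admissible because the harmonic conjugate $\mbox{Im}(f)$ of $h$ is single valued, so $\mbox{Im}\int_\gamma\partial h=0$ for every $\gamma$. The output is a complete conformal minimal immersion $X=(X_1,X_2,X_3)$ with $X_3=h$ and $p_X=0$. Since the real periods of $\Phi=\partial X$ vanish automatically and the imaginary ones vanish because $p_X=0$, the form $\Phi$ is exact and $F:=\int\Phi$ is a well defined holomorphic null curve with $F_3=f$; its metric equals $ds_X^2$, so $F$ is complete. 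I would record once and for all that $F$ is full precisely when $X$ is non flat: any nontrivial linear relation among $\phi_1,\phi_2,\phi_3$ combined with $\sum_j\phi_j^2=0$ forces the three forms to be proportional, i.e. the surface to be planar. Non flatness can be arranged in the construction, since the labyrinth data in Lemma \ref{lem:lema} makes the Gauss map non constant.

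For part (d) I would close the cycle $(1)\Rightarrow(2)\Rightarrow(3)\Rightarrow(1)$. For $(1)\Rightarrow(2)$, normalize the bounded function so that $f(\Ncal)\subset\d$; then the null curve $F$ above lands in $\c^2\times\d$ and is full and complete. For $(2)\Rightarrow(3)$, I would simply forget the middle coordinate and consider $(F_1,F_3)\colon\Ncal\to\c\times\d$. This is an immersion, since a common zero of $\phi_1$ and $\phi_3$ would make $\phi_2$ vanish there too by nullity; it is complete because the triangle inequality yields $|\phi_2|^2=|\phi_1^2+\phi_3^2|\le|\phi_1|^2+|\phi_3|^2$, so that $|\phi_1|^2+|\phi_3|^2\ge\frac12\,ds_X^2$; and it is full by the same proportionality argument. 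For $(3)\Rightarrow(1)$, the second component of a full holomorphic immersion into $\c\times\d$ is a bounded holomorphic function, non constant because otherwise the image would sit inside a complex line.

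For part (e) everything reduces, via (d), to producing a non constant bounded holomorphic function on $\Ncal$. Here I would use that a hyperbolic open Riemann surface of finite topology is conformally a compact Riemann surface with finitely many points and at least one closed disk removed — equivalently the interior of a compact bordered Riemann surface with finitely many punctures, as recalled before Corollary \ref{co:slab1}. Such a bordered surface carries non constant bounded holomorphic functions, and the punctures are removable for bounded functions, so restricting one such function to $\Ncal$ supplies the needed $f$; part (d) then yields both immersions.

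For part (f) the extra demand is properness. Given a proper holomorphic $f\colon\Ncal\to\d$ and a simply connected $D\neq\c$ with Riemann map $\varphi\colon\d\to D$, the composite $\varphi\circ f\colon\Ncal\to D$ is proper and holomorphic and non constant; running the construction with $h=\mbox{Re}(\varphi\circ f)$ (again $p=0$ is admissible) gives a complete full null curve $F$ whose last coordinate $F_3=\varphi\circ f$ is a proper map into $D$. Properness of one coordinate propagates to the whole curve: for compact $K\subset\c^2\times D$ the projection of $K$ to $D$ is compact, hence $(\varphi\circ f)^{-1}$ of it is compact and contains the closed set $F^{-1}(K)$, so $F$ is proper, and the identical argument makes $(F_1,F_3)$ proper into $\c\times D$; the case $D=\c$ is \cite{al}. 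The genuinely new work, beyond invoking Theorem \ref{th:harmonic}, is therefore $(i)$ certifying non flatness so that all the curves are full, and $(ii)$ for part (e), the existence of bounded holomorphic functions on finite topology hyperbolic surfaces; I expect $(i)$ to be the main recurring obstacle, while the completeness of the projections rests only on the elementary bound $|\phi_2|^2\le|\phi_1|^2+|\phi_3|^2$.
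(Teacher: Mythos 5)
Your overall route is exactly the paper's: the paper derives this corollary in one line, by applying Theorem \ref{th:harmonic} to $h=\mbox{Re}(f)$ with $p=0$ and integrating the resulting exact Weierstrass data into a null curve. Your period computation justifying $p=0$, the observation that $(F_1,F_3)$ is an immersion by nullity, the completeness estimate $|\phi_2|^2=|\phi_1^2+\phi_3^2|\le|\phi_1|^2+|\phi_3|^2$, the reduction of (e) to (d) via the classification of hyperbolic finite-topology surfaces, and the propagation of properness through the third coordinate in (f) are all correct and are the intended arguments. The one genuine gap is your certification that the curves you \emph{construct} are full (needed in (d) $(1)\Rightarrow(2)$ and in (e), (f); in $(2)\Rightarrow(3)$ fullness is hypothesized, and there your proportionality argument suffices). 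You correctly reduce fullness to non-flatness of $X$, but the claim that ``non flatness can be arranged in the construction, since the labyrinth data in Lemma \ref{lem:lema} makes the Gauss map non constant'' does not hold up as stated: first, the Weierstrass data prescribed on the labyrinth $\Kcal$ has \emph{constant} Gauss map there (its stereographic projection is identically $M$), so any non-constancy must come from contrast with the values on $U$, which needs an argument and a suitable choice of $M$; second, and more seriously, non-flatness of each approximating immersion $Y_n$ does not pass to the limit --- the Gauss maps converge uniformly on compact sets and a limit of non-constant holomorphic maps can perfectly well be constant --- so one would need quantitative control along the induction, which neither you nor the paper sets up.

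Fortunately the gap closes without touching the construction, because fullness is automatic here. If the complete null curve $F$ were not full then, as you showed, nullity forces $\partial F=(c_1,c_2,c_3)\,\phi$ for constants with $\sum_j c_j^2=0$ and a nowhere-vanishing holomorphic $1$-form $\phi$; hence $F$ is an affine image of the single holomorphic function $G=\int\phi$, whose induced metric $\big(\sum_j|c_j|^2\big)|dG|^2$ is complete and nowhere degenerate. Writing $F_3=c_3G+d$, non-constancy of $F_3$ gives $c_3\neq0$, so $G$ maps $\Ncal$ into the proper subdomain $\Omega=(\d-d)/c_3$ of $\c$ (respectively $\Omega=(D-d)/c_3$ in case (f) with $D\neq\c$). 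But a holomorphic local diffeomorphism $G:\Ncal\to\Omega\subsetneq\c$ with complete pullback metric cannot exist: lift via $G$ the straight segment from $G(p_0)$ to a boundary point of $\Omega$; the lift has finite length, so completeness lets it extend up to the first exit time of the segment from $\Omega$, producing a point of $\Ncal$ sent by $G$ to $\partial\Omega$, contradicting $G(\Ncal)\subset\Omega$. This yields fullness in (d), (e) and in (f) for $D\neq\c$ (the case $D=\c$ being quoted from \cite{al}), and with this substitution your proof is complete and coincides with the paper's intended one.
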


\begin{remark}\label{rem:bounded}
The family of Riemann surfaces involved in item $(d)$ (and so in item $(a)$) contains examples with any open orientable topological type.

The family of Riemann surfaces concerning item $(f)$ is also very vast. For instance, it includes all the coverings of the unit disk.
\end{remark}

Although the first statement of the above remark is well known, for completeness we sketch a proof based on Scheinberg approximation results \cite{sc}. Let $\mathcal{N}$ be an open Riemann surface, and consider two compact regions $M,$ $V\subset \mathcal{N}$ such that $M\subset V^\circ,$ $\chi(V^\circ-M)=-1$ and $V^\circ-M$ has no bounded components in $V^\circ.$ Take also $\epsilon>0$ and a non constant holomorphic function  $f:M\to\d.$
Consider a Jordan arc $\gamma\subset V^\circ-M$ with endpoints in $\partial (M)$ and otherwise disjoint from $\partial (M)$ such that $\chi(V^\circ-(M\cup \gamma))=0$ and $V^\circ-(M\cup \gamma)$ has no bounded components in $V^\circ.$ For simplicity write $S=M\cup \gamma.$
Construct a continuous function $\hat{f}:S\to\d$ with $\hat{f}|_{M}=f,$ and use Scheinberg approximation Theorem to find a compact tubular neighborhood $\tilde{M}$ of $S$ in $V^\circ$ and a holomorphic function $\tilde{f}:\tilde{M}\to\d$ such that $\chi(V^\circ-\tilde{M})=0$ and $\|\tilde{f}-f\|<\epsilon$ on $M.$ Applying recursively this argument, we can find sequences  $\{V_n\}_{n \in \n}$  of compact regions in  $\mathcal{N}$ and holomorphic functions $\{f_n:V_n \to \d\}_{n \in \n},$ such that:
\begin{itemize}
 \item  $V_{n} \subset V_{n+1}^\circ,$ $\chi(V_{n+1}^\circ-V_n)=-1,$  $V_{n+1}^\circ-V_n$ has no bounded components in $V_{n+1}^\circ$ and $N:=\cup_{n \in \n} V_n$ is homeomorphic to $\mathcal{N},$ and
 \item $\|f_{n+1}-f_n\|<\epsilon 2^{-n-1}$ on $V_n$ for all $n,$ where $\epsilon=\max_{V_1}|f_1|-\min_{V_1}|f_1|>0.$
\end{itemize}
The sequence $\{f_n\}_{n \in \n}$ converges uniformly on compact subsets of $N$ to a non constant bounded holomorphic function $u:N \to \c.$ The proof is done.

We finish by proving a  Lorentzian version of Theorem \ref{th:harmonic} for  weakly complete maximal surfaces in the Lorentz-Minkowski 3-spacetime $\r_1^3$ with signature $(-,+,+).$ Recall that a conformal maximal immersion $X:M \to \r^3_1$ with singularities  is said to be {\em weakly complete} if the metric $\sum_{j=1}^3 |\phi_j|^2$ is complete on $M,$ where $\Phi=(\phi_1,\phi_2,\phi_3)$ are the Weierstrass data of $X$ (see \cite{u-y}).

\begin{corollary}\label{co:maxi}
Let $h:\Ncal\to\R$ be a non constant harmonic function.

Then there exist weakly complete conformal maximal immersions $Y=(Y_1,Y_2,Y_3):\Ncal\to\R_1^3$ and $Z=(Z_1,Z_2,Z_3):\Ncal\to\R_1^3$ with $Y_1=h=Z_2.$
\end{corollary}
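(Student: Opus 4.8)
The plan is to reduce the Lorentzian statement to the already-proved Euclidean Theorem \ref{th:harmonic} by the classical device of multiplying two of the three Weierstrass $1$-forms by $i$, which turns the Euclidean nullity relation $\phi_1^2+\phi_2^2+\phi_3^2=0$ into the Lorentzian one $-\psi_1^2+\psi_2^2+\psi_3^2=0$ appropriate to the signature $(-,+,+)$. First I would apply Theorem \ref{th:harmonic} to the given $h$ together with the flux morphism $p:\Hcal_1(\Ncal,\Z)\to\R^3$ defined by $p(\gamma)=(0,0,\mathrm{Im}\int_\gamma\partial h)$. This $p$ is a group morphism whose third coordinate is forced to equal $\mathrm{Im}\int_\gamma\partial h$, so the hypotheses of the theorem hold while leaving us free to annihilate the first two components. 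The output is a complete conformal minimal immersion $X=(X_1,X_2,X_3):\Ncal\to\R^3$ with $X_3=h$ and $p_X=p$, whose Weierstrass data $\Phi=(\phi_1,\phi_2,\phi_3)$ satisfy $\phi_3=\partial h$, $\sum_j\phi_j^2=0$, and, crucially, $\mathrm{Im}\int_\gamma\phi_1=\mathrm{Im}\int_\gamma\phi_2=0$ for every $\gamma\in\Hcal_1(\Ncal,\Z)$.

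Next I would read off the two maximal surfaces directly from this data: for $Y$ I take the triple $\Phi^Y=(\phi_3,\,i\phi_1,\,i\phi_2)$, and for $Z$ the triple $\Phi^Z=(i\phi_2,\,\phi_3,\,\phi_1)$. A one-line computation gives the Lorentzian nullity, since $-\phi_3^2+(i\phi_1)^2+(i\phi_2)^2=-(\phi_1^2+\phi_2^2+\phi_3^2)=0$ for $\Phi^Y$, and $-(i\phi_2)^2+\phi_3^2+\phi_1^2=\phi_1^2+\phi_2^2+\phi_3^2=0$ for $\Phi^Z$; thus both triples solve the conformality/maximality equation of $\R^3_1$. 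Moreover, since $\sum_j|\phi_j|^2=ds^2_X$ never vanishes, neither do $\sum_j|\phi^Y_j|^2$ nor $\sum_j|\phi^Z_j|^2$, so both triples are admissible generalized Weierstrass data for a conformal maximal immersion with singularities in the sense of \cite{u-y}.

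The step where the choice of flux pays off is the period (well-definedness) check, and this is really the only point that needs care. For $Y=\mathrm{Re}\int\Phi^Y$ I must verify that $\Phi^Y$ has no real periods: $\mathrm{Re}\int_\gamma\phi_3=0$ because $\mathrm{Re}\int\phi_3=h$ is single-valued, while $\mathrm{Re}\int_\gamma(i\phi_1)=-\mathrm{Im}\int_\gamma\phi_1=0$ and $\mathrm{Re}\int_\gamma(i\phi_2)=-\mathrm{Im}\int_\gamma\phi_2=0$ precisely because the first two components of $p_X$ were killed. Hence $Y$ is well defined and $Y_1=\mathrm{Re}\int\phi_3=h$. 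The analogous check for $Z=\mathrm{Re}\int\Phi^Z$ uses in addition that $\mathrm{Re}\int_\gamma\phi_1=0$ because $X_1$ is single-valued, yielding $Z_2=\mathrm{Re}\int\phi_3=h$.

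Finally, weak completeness demands no new estimate: multiplication by $i$ and permutation of the components preserve moduli, so the auxiliary (Euclidean) metric of each surface equals $\sum_j|\phi^Y_j|^2=\sum_j|\phi^Z_j|^2=|\phi_1|^2+|\phi_2|^2+|\phi_3|^2=ds^2_X$, which is complete by Theorem \ref{th:harmonic}. I do not expect a genuine obstacle; the single conceptual point is to recognize that the freedom to prescribe the first two flux components in Theorem \ref{th:harmonic} is exactly what makes the Wick-rotated $1$-forms periodless, and that weak completeness is measured in the rotation-invariant auxiliary metric. Everything else is direct verification.
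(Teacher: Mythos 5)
Your proposal is correct and is essentially the paper's own proof: the paper likewise applies Theorem \ref{th:harmonic} to $h$ with the flux morphism $p(\gamma)=(0,0,\mathrm{Im}\int_\gamma \partial h)$, which kills the imaginary periods of $\phi_1,\phi_2$, and then defines $Y=(X_3,X_2^*,X_1^*)$ and $Z=(X_1^*,X_3,X_2)$ via the conjugate harmonic functions $X_j^*$ --- exactly your multiplication of the Weierstrass forms by $i$, your triples differing from the paper's only by permutations and signs, i.e.\ by trivial isometries of $\R^3_1$. The period check and the observation that weak completeness is immediate because $\sum_j|\phi_j^Y|^2=\sum_j|\phi_j^Z|^2=ds^2_X$ match the paper's (implicit) verification.
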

\begin{proof}
Let $X=(X_1,X_2,X_3):\mathcal{N} \to \r^3$ be the immersion in Theorem \ref{th:harmonic} associated to $h$ and the group morphism $p:\mathcal{H}_1(\mathcal{N},\z)\to\r^3,$ $p(\gamma)=(0,0,\mbox{Im}\int_{\gamma} \partial h)$ for all $\gamma\in\mathcal{H}_1(\mathcal{N},\z).$ Labeling $X_j^*$ as the conjugate harmonic function of $X_j,$ $j=1,2,$ then $Y=(X_3,X_2^*,X_1^*):\mathcal{N}\to\r_1^3$ and $Z=(X_1^*,X_3,X_2):\mathcal{N}\to\r_1^3$ satisfy the conclusion of the corollary.
\end{proof}



\end{document}